\documentclass[a4paper,10pt]{amsart}

\usepackage[english]{babel}
\usepackage[utf8]{inputenc}

\usepackage{amssymb}
\usepackage{amsmath}
\usepackage{amsthm}
\usepackage{bbm}

\usepackage{enumerate}
\usepackage{tikz}
\usepackage{marginnote}


\newcommand{\id}{I}


\newcommand{\up}{{\operatorname{u}}}


\theoremstyle{definition}
\newtheorem{definition}{Definition}[section]
\newtheorem{remark}[definition]{Remark}

\theoremstyle{plain}
\newtheorem{proposition}[definition]{Proposition}

\newtheorem{theorem}[definition]{Theorem}
\newtheorem{corollary}[definition]{Corollary}


\begin{document}

\title{A short note on band projections in partially ordered vector spaces}
\author{Jochen Gl\"uck}
\email{jochen.glueck@alumni.uni-ulm.de}
\date{\today}

\begin{abstract}
	Consider an Archimedean partially ordered vector space $X$ with generating cone (or, more generally, a pre-Riesz space $X$). Let $P$ be a linear projection on $X$ such that both $P$ and its complementary projection $\id - P$ are positive; we prove that the range of $P$ is a band. This shows that the well-known concept of band projections on vector lattices can, to a certain extent, be transferred to the framework of ordered vector spaces.
\end{abstract}

\maketitle

\section{Introduction}

Consider a vector space $X$ over the real field and let $X_+$ be a \emph{cone} in $X$, by which we mean that $\alpha X_+ + \beta X_+ \subseteq X_+$ for all scalars $\alpha, \beta \in [0,\infty)$ and that $X_+ \cap (-X_+) = \{0\}$. The tuple $(X,X_+)$ is called a \emph{partially ordered vector space}, and the partial order $\le$ on $X$ referred to by this terminology is the order given by $x \le y$ if and only if $y-x \in X_+$.

Partially ordered vector spaces have been present in functional analysis since the first half of the 20th century, and a special focus has often been placed on an important special case called \emph{vector lattices} or \emph{Riesz spaces}, which are partially ordered vector spaces in which any two elements have an infimum. For an overview over the theory of vector lattices we refer to one of the classical monographs \cite{Luxemburg1971}, \cite{Zaanen1983}, \cite{Schaefer1974} and \cite{Meyer-Nieberg1991}.

The concepts of \emph{disjointness}, \emph{disjoint complements} and \emph{bands} are an important building block of the theory of vector lattices. In 2006  van Gaans and Kalauch \cite{Gaans2006} generalised those notions to partially ordered vectors spaces and, based on earlier work by van Haandel \cite{Haandel1993}, they could show that some important properties of bands remain true on partially ordered vector spaces under very mild assumptions on the space. This lead to a series of follow-up papers \cite{Gaans2008, Gaans2008a, Kalauch2015, KalauchPreprint1, KalauchPreprint3} where disjointness and bands in partially ordered vector spaces were studied in more detail; moreover, in \cite{Kalauch2018} it was demonstrated that this theory can be used to generalise results about disjointness preserving $C_0$-semigroups on Banach lattices to the setting of ordered Banach spaces.

In the present short note we consider a linear projection $P$ on an ordered vector space $(X,X_+)$ and assume that both $P$ and $\id - P$ are positive. We call such a projection an \emph{order projection} and, under the same mild assumptions on $(X,X_+)$ as in \cite{Gaans2006}, we show that the range of $P$ is a band (and in fact even a projection band) in $X$.
This opens the door for a theory of band projections on $(X,X_+)$ which is, to a certain extent, similar to the theory of band projections on vector lattices.

In Section~\ref{section:reminder} we recall some terminology and give a brief reminder of bands in partially ordered vector spaces. In Section~\ref{section:projections} we discuss order projections and prove our main result.

\section{Bands in partially ordered vector spaces} \label{section:reminder}

Let $(X,X_+)$ be a partially ordered vector space. This space is called \emph{directed} if, for all $x,y \in X$, there exists $z \in X$ such that $z \ge x$ and $z \ge y$. The cone $X_+$ is called \emph{generating} if $X_+ - X_+ = X$, and it is easy to see that $(X,X_+)$ is directed if and only if $X_+$ is generating. The partially ordered vector space $(X,X_+)$ is called \emph{Archimedean} if the inequality $nx \le y$ for two vectors $x,y \in X$ and all positive integers $n \in \mathbb{N} := \{1,2,\dots\}$ implies that $x \le 0$. If, for instance, $X$ carries a norm and $X_+$ is closed with respect to this norm, one can readily check that $(X,X_+)$ is Archimedean.

The concept of disjointness in $(X,X_+)$ can be defined as follows: two vectors $x,y \in X$ are called \emph{disjoint}, and we denote this by $x \perp y$, if
\begin{align*}
	\{x+y,-x-y\}^\up = \{x-y,y-x\}^\up;
\end{align*}
here, the set $S^\up := \{z \in X: \, z \ge x \text{ for all } x \in S\}$ denotes the set of all \emph{upper bounds} of any given set $S \subseteq X$. This definition of disjointness was introduced in \cite{Gaans2006} and is motivated by the fact that two elements $x$ and $y$ of a vector lattice are disjoint if and only if $\lvert x+y\rvert = \lvert x-y\rvert$. If ones used this fact, it is not difficult to see that the above definition of disjointness coincides with the classical concept of disjointness in case that $(X,X_+)$ is a vector lattice.

Some elementary properties of disjoint elements in a partially ordered vector space $(X,X_+)$ can be found in \cite[Section~1]{Gaans2006}. For instance, besides the evident fact that $x \perp y$ if and only if $y \perp x$, one also has $x \perp x$ if and only if $x=0$. Moreover, note that the zero vector is disjoint to every other vector in $X$. We will need the following characterisation of disjointness in case that both $x$ and $y$ are positive. Recall that if two elements $x,y \in X$ have a \emph{largest lower bound} in $X$ then this largest lower bound is uniquely determined and called the \emph{infimum} of $x$ and $y$.

\begin{proposition} \label{prop:disjointness-of-positive-vectors}
	Two vectors $0 \le x,y \in X$ are disjoint if and only if their infimum exists and equals $0$.
\end{proposition}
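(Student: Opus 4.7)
My plan is to work directly with the definition of disjointness and to exploit the positivity of $x$ and $y$ in order to reduce the symbolic condition on upper bound sets to the single statement $\inf(x, y) = 0$.

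First I would unpack the definition: $x \perp y$ means that the upper bound sets of $\{x+y, -x-y\}$ and of $\{x-y, y-x\}$ coincide. Since $x, y \ge 0$, the relations $x+y \ge x-y$ and $x+y \ge y-x$ give the inclusion $\{x+y, -x-y\}^\up \subseteq \{x-y, y-x\}^\up$ for free, so only the reverse inclusion carries any content. I would also note that any $z$ satisfying $z \ge x-y$ and $z \ge y-x$ automatically fulfils $2z \ge 0$, hence $z \ge 0 \ge -(x+y)$; so one of the two inequalities required for $z$ to lie in $\{x+y,-x-y\}^\up$ is always free.

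For the implication $\inf(x,y) = 0 \Rightarrow x \perp y$, I would start from an arbitrary $z$ with $z \ge x-y$ and $z \ge y-x$ and try to manufacture a lower bound of $\{x, y\}$ from it. The key idea is the candidate $w := (x + y - z)/2$: rearranging the two hypotheses on $z$ shows precisely that $w \le x$ and $w \le y$. Hence $w \le \inf(x, y) = 0$, which unwinds to $z \ge x + y$, completing the missing inclusion.

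For the converse, I would take any lower bound $w$ of $\{x, y\}$ and test disjointness against the mirror substitution $z := x + y - 2w$. A direct check shows $z \ge x - y$ and $z \ge y - x$, so by disjointness $z$ is also an upper bound of $x + y$; the inequality $z \ge x+y$ rearranges to $w \le 0$. Combined with the trivial fact that $0$ is a lower bound of $\{x, y\}$, this proves $\inf(x, y) = 0$. The only non-routine point in the whole argument is spotting the pair of mutually inverse affine substitutions $w = (x+y-z)/2$ and $z = x+y-2w$, which serves as a dictionary between lower bounds of $\{x, y\}$ and upper bounds of $\{x-y, y-x\}$; once these are guessed, everything else is bookkeeping.
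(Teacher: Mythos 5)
Your proposal is correct and follows essentially the same route as the paper: both arguments hinge on the observation that the condition $z \ge -x-y$ is automatic, and on the affine dictionary between upper bounds $z$ of $\{x-y,y-x\}$ and lower bounds $(x+y-z)/2$ of $\{x,y\}$ (the paper tests the converse with $x+y-f$ rather than your $x+y-2w$, an immaterial difference). No gaps.
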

\begin{proof}
	We first note that $\{x+y,-x-y\}^\up = \{x+y\}^\up$ (since $-x-y \le 0 \le x+y$) and that $x+y$ is an upper bound of both $x-y$ and $y-x$, so $\{x+y\}^\up \subseteq \{x-y,y-x\}^\up$.
	
	Now assume that $x$ and $y$ have an infimum and that this infimum coincides with $0$. Let $f$ be an upper bound of $x-y$ as well as of $y-x$. Then we have $y \ge x-f$ as well as $x \ge y-f$. This implies that $2y \ge x+y-f$ and $2x \ge x+y-f$, so $(x+y-f)/2$ is a lower bound of both $x$ and $y$ and thus, this vector is negative. Hence, $x+y \le f$.
	
	Now assume instead that $\{x+y\}^\up = \{x-y,y-x\}^\up$. Clearly, $0$ is a lower bound of both $x$ and $y$ and we have to show that it is the largest lower bound. So let $f \in X$ be another lower bound of $x$ and $y$. Then $x-f$ and $y-f$ are both positive, so we conclude that
	\begin{align*}
		x+y-f \ge x \ge x-y \qquad \text{and} \qquad x+y-f \ge y \ge y-x.
	\end{align*}
	Hence, $x+y-f$ is an upper bound of $\{x-y,y-x\}$ and thus, by assumption, also an upper bound of $x+y$. Therefore, $f \le 0$.
\end{proof}

We will also need the following simple fact which can, for instance, be derived from Proposition~\ref{prop:disjointness-of-positive-vectors} above.

\begin{proposition} \label{prop:disjointness-smaller-vectors}
	Let $x_1,x_2 \in X$. If $0 \le x_1 \le x_2$ and $x_2$ is disjoint to a positive vector $y \in X$, then $x_1$ is disjoint to $y$, too.
\end{proposition}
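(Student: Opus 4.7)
The plan is to reduce everything to Proposition~\ref{prop:disjointness-of-positive-vectors} in both directions. Since $x_2$ and $y$ are positive and disjoint, that proposition tells us that the infimum $x_2 \wedge y$ exists and equals $0$. The goal is to upgrade this to the analogous statement for $x_1$ in place of $x_2$, after which a second application of Proposition~\ref{prop:disjointness-of-positive-vectors} yields $x_1 \perp y$.

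First I would observe that $0$ is a lower bound of $\{x_1, y\}$, because both vectors are positive. It then remains to show that $0$ is in fact the \emph{largest} lower bound. Given any $f \in X$ with $f \le x_1$ and $f \le y$, the hypothesis $x_1 \le x_2$ yields $f \le x_2$, so $f$ is automatically a lower bound of the pair $\{x_2, y\}$. Since the infimum of this latter pair equals $0$, we conclude $f \le 0$, as required.

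Thus the infimum $x_1 \wedge y$ exists and equals $0$. Invoking Proposition~\ref{prop:disjointness-of-positive-vectors} once more (now in the direction from infimum-equals-zero to disjointness) gives $x_1 \perp y$. There is no real obstacle here: the argument is essentially the observation that the set of lower bounds of $\{x_1, y\}$ is contained in the set of lower bounds of $\{x_2, y\}$, which is a direct consequence of $x_1 \le x_2$.
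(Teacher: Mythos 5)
Your argument is correct and is precisely the derivation the paper has in mind: the paper omits a proof, remarking only that the statement ``can, for instance, be derived from Proposition~\ref{prop:disjointness-of-positive-vectors}'', and your observation that every lower bound of $\{x_1,y\}$ is a lower bound of $\{x_2,y\}$ is exactly that derivation.
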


Now we finally come to the concept of disjoint complements. For each subset $S \subseteq X$ the set
\begin{align*}
	S^\perp := \{x \in X: \, x \perp y \text{ for all } y \in S \}
\end{align*}
is called the \emph{disjoint complement} of $S$. Inspired by the theory of vector lattices one suspects this concept to be quite useful, but only if one can prove that $S^\perp$ is always a vector subspace of $X$. It was shown in \cite[Corollary~2.2 and Section~3]{Gaans2006} that this is true on a very large class of partially ordered vector spaces, namely on each so-called \emph{pre-Riesz space}. This concept is originally due to van Haandel \cite{Haandel1993}; for a precise definition of pre-Riesz spaces we refer for instance to \cite[Definition~1.1(viii)]{Haandel1993} or \cite[Definition~3.1]{Gaans2006}. Here, we only recall that every pre-Riesz space has generating cone and that, conversely, every partially ordered vector space with generating cone which is, in addition, Archimedean is a pre-Riesz space; see \cite[Theorem~3.3]{Gaans2006} or Haandel's original result in \cite[Theorem~1.7(ii)]{Haandel1993} (but note that Haandel uses a somewhat different terminology: he uses that notion \emph{integrally closed} for what we call \emph{Archimedean}, and he uses the term \emph{Archimedean} for another, weaker property).

Let $(X,X_+)$ be a pre-Riesz space; as mentioned above, this implies that the disjoint complement $S^\perp$ of any subset $S$ of $X$ is a vector subspace of $X$. A subset $B \subseteq X$ is called a \emph{band} if the set $B^{\perp\perp} := (B^\perp)^\perp$ equals $B$ (see \cite[Definition~5.4]{Gaans2006}). For every $S \subseteq X$ the disjoint complement $S^\perp$ is itself a band \cite[Proposition~5.5(ii)]{Gaans2006}. Moreover, every band $B \subseteq X$ is \emph{solid} in the sense of \cite[Definition~2.1]{Gaans2003} or \cite[Definition~5.1]{Gaans2006}; this was proved in \cite[Proposition~5.3]{Gaans2006}. We call a band $B \subseteq X$ \emph{directed} if and only if for all $x,y \in B$ there exists $z \in B$ such that $z \ge x$ and $z \ge y$; equivalently, the positive cone $B_+ := B \cap X_+$ in $B$ fulfils $B_+-B_+ = B$.

Inspired by the theory of vector lattices we call a band $B \subseteq X$ a \emph{projection band} if $X = B \oplus B^\perp$, i.e.\ if $X$ is the direct sum of $B$ and its disjoint complement $B^\perp$. The following observation follows immediately from the definition of the notions \emph{band} and \emph{projection band}.

\begin{proposition} \label{prop:disjoint-complement-of-a-projection-band}
	Let $(X,X_+)$ be a pre-Riesz-space and let $B \subseteq X$ be a band. Then $B$ is a projection band if and only if $B^\perp$ is a projection band.
\end{proposition}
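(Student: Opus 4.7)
The plan is essentially to unfold the definitions and exploit the involution property of the disjoint-complement operation on bands. The key observation is that $B^\perp$ is automatically a band in any pre-Riesz space (this was quoted from \cite[Proposition~5.5(ii)]{Gaans2006} earlier in the section), so the statement ``$B^\perp$ is a projection band'' makes sense without further assumptions.

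First I would record that, since $B$ is a band, we have $B^{\perp\perp} = B$ by the very definition of a band. Therefore the two decompositions $X = B \oplus B^\perp$ and $X = B^\perp \oplus B^{\perp\perp}$ are literally the same direct sum decomposition of $X$, merely with the summands relabelled. By the definition of a projection band, the first identity is equivalent to $B$ being a projection band, while the second is equivalent to $B^\perp$ being a projection band.

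Hence the equivalence is immediate. There is no genuine obstacle here; the only thing one needs to be careful about is to invoke that $B^\perp$ is itself a band before speaking about it being a projection band, and to use the band property $B^{\perp\perp}=B$ to identify the two decompositions. No Archimedean or directedness hypothesis beyond the standing pre-Riesz assumption enters the argument.
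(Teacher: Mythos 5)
Your proof is correct and matches the paper's intent: the paper gives no explicit proof, stating only that the proposition ``follows immediately from the definition of the notions \emph{band} and \emph{projection band}'', and your argument (using $B^{\perp\perp}=B$ to identify the two direct sum decompositions, after noting that $B^\perp$ is itself a band) is exactly the routine verification being alluded to.
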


If a projection band $B \subseteq X$ is given, we can define a linear projection $P$ onto $B$ along $B^\perp$. This gives rise to the following terminology: a linear projection $P: X \to X$ on a pre-Riesz space $(X,X_+)$ is called a \emph{band projection} if there exists a projection band $B \subseteq X$ such that $P$ is the projection onto $B$ along $B^\perp$; it follows from Proposition~\ref{prop:disjoint-complement-of-a-projection-band} that a linear projection $P$ is a band projection if and only if $\id - P$ is a band projection. If $B \subseteq X$ is a projection band, then there exists, of course, only one band projection with range $B$, and this projection is called \emph{the} band projection onto $B$.

Recall that a linear operator on a partially ordered vector space $(X,X_+)$ is called \emph{positive} if it maps the cone $X_+$ into itself. The following proposition shows, among other things, that band projections are always positive.

\begin{proposition} \label{prop:band-projections-are-positive}
	Let $(X,X_+)$ be a partially ordered vector space.
	\begin{enumerate}[\upshape (a)]
		\item Assume that a vector $0 \le x \in X$ can be written as $x = y+z$ for disjoint vectors $y,z \in X$. Then $x$ and $y$ are positive, too.
		\item If $(X,X_+)$ is a pre-Riesz space and $P: X \to X$ is a band projection, then $P$ and $\id - P$ are positive.
	\end{enumerate}
\end{proposition}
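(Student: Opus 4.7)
Part (a) is the engine, and once it is proved, part (b) follows in a couple of lines.

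For (a), my plan is to exploit the positivity of $x = y+z$ to collapse the disjointness condition. Since $-(y+z) \le 0 \le y+z$, the set of upper bounds satisfies $\{y+z,-y-z\}^\up = \{y+z\}^\up$, so the hypothesis $y \perp z$ simplifies to
\[
\{y+z\}^\up = \{y-z, z-y\}^\up.
\]
The vector $y+z$ is trivially an upper bound of itself, so by this identity it is also an upper bound of $y-z$ and of $z-y$. The first inequality $y+z \ge y-z$ rearranges to $2z \in X_+$, and the second to $2y \in X_+$. Since $X_+$ is closed under multiplication by the nonnegative scalar $\tfrac{1}{2}$, this gives $y \ge 0$ and $z \ge 0$.

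For (b), I would unwind the definition: $P$ is the projection onto some projection band $B$ along $B^\perp$. Given $0 \le x \in X$, decompose $x = Px + (\id - P)x$ with $Px \in B$ and $(\id - P)x \in B^\perp$. By the definition of the disjoint complement together with the symmetry of $\perp$, every element of $B$ is disjoint from every element of $B^\perp$, so $Px \perp (\id - P)x$. Part (a) then yields $Px \ge 0$ and $(\id - P)x \ge 0$, so both $P$ and $\id - P$ are positive.

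The only subtle point is recognising, in (a), that the positivity of $x$ makes the definition of disjointness degenerate in exactly the way needed to pull nonnegativity out of each summand; there is no real obstacle, and in particular neither the Archimedean property, the pre-Riesz structure, nor Proposition~\ref{prop:disjointness-of-positive-vectors} is invoked in (a). Part (b) uses the pre-Riesz hypothesis only implicitly, via the existence of the decomposition $X = B \oplus B^\perp$ that underlies the notion of a band projection.
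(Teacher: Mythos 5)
Your proof is correct and follows essentially the same route as the paper: in (a) both arguments observe that the positivity of $x=y+z$ collapses $\{y+z,-y-z\}^\up$ to $\{y+z\}^\up$, read off $y+z\ge y-z$ and $y+z\ge z-y$ from the disjointness identity, and rearrange; in (b) both apply (a) to the decomposition $x=Px+(\id-P)x$ into disjoint pieces. (Note in passing that the conclusion of (a) as printed, ``$x$ and $y$ are positive,'' is evidently a typo for ``$y$ and $z$,'' which is exactly what you prove.)
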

\begin{proof}
	(a) Since $x = y+z$ and $x \ge 0 \ge -x = -y-z$, we conclude that $x$ is an upper bound of $\{y+z,-y-z\}$ and thus, by the disjointness of $y$ and $z$, also an upper bound of $\{y-z,z-y\}$. Hence,
	\begin{align*}
		y+z = x \ge y-z \qquad \text{and} \qquad y+z = x \ge z-y,
	\end{align*}
	which implies $z \ge 0$ and $y \ge 0$.
	
	(b) This is a consequence of~(a).
\end{proof}

Let $(X,X_+)$ be a partially ordered vector space and let $P: X \to X$ be a linear projection. If $P$ is positive, then we have $PX_+ = PX \cap X_+$, so the cone $PX_+$ on the space $PX$ induces the same order on $PX$ as the order inherited from $X$. Moreover, if $X_+$ is generating in $X$, then $PX_+$ is generating in $PX$. This observation is employed in the proof of the following proposition and it is also used in Section~\ref{section:projections} below.

\begin{proposition} \label{prop:directed-projection-bands}
	Let $(X,X_+)$ be a pre-Riesz space. Then every projection band in $X$ is directed.
\end{proposition}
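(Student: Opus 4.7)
The plan is to use the band projection $P$ onto $B$ along $B^\perp$, whose existence is guaranteed by the assumption that $B$ is a projection band. By Proposition~\ref{prop:band-projections-are-positive}(b), both $P$ and $\id - P$ are positive.

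First I would fix arbitrary $x, y \in B$ and find a common upper bound in $B$. Since $(X,X_+)$ is pre-Riesz, its cone is generating, so $X$ is directed; hence there exists some $w \in X$ with $w \ge x$ and $w \ge y$. Now I would push $w$ into $B$ by setting $z := Pw \in B$. Positivity of $P$ applied to $w - x \ge 0$ and $w - y \ge 0$ yields $Pw \ge Px = x$ and $Pw \ge Py = y$ (using that $x, y \in B = \operatorname{range} P$ are fixed by $P$), so $z$ is the desired upper bound in $B$.

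Equivalently — and this is the argument the remark preceding the proposition is pointing at — one can argue at the level of cones: since $X_+$ is generating in $X$ and $P$ is positive with range $B$, the image $PX_+$ is a generating cone on $PX = B$; but $PX_+ \subseteq B \cap X_+ = B_+$, so $B_+$ is generating in $B$, which is exactly the statement that $B$ is directed. The only step that requires any thought is recognising that one does not need the space $B$ with the restricted order to a priori have nice properties; all the work is done by the positivity of $P$ together with the fact that pre-Riesz spaces have generating cone. There is no real obstacle.
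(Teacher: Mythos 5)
Your proof is correct and follows essentially the same route as the paper: the paper's own proof is exactly the cone-level observation you give in your second paragraph (positivity of the band projection $P$ makes $PX_+ = PX \cap X_+$ a generating cone in $PX = B$), and your element-wise argument with $z := Pw$ is just that observation unfolded. Nothing further is needed.
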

\begin{proof}
	If $B \subseteq X$ is a projection band, then the band projection onto $X$ is positive according to Proposition~\ref{prop:band-projections-are-positive}(b). Since every pre-Riesz space has generating cone, the assertion follows from the discussion right before Proposition~\ref{prop:directed-projection-bands}.
\end{proof}

\section{Order projections are band projections} \label{section:projections}

We call a linear projection $P$ on a partially ordered vector space $(X,X_+)$ an \emph{order projection} if both $P$ and $\id - P$ are positive. Thus, $P$ is an order projection if and only if $0 \le Px \le x$ for all $x \in X_+$. Proposition~\ref{prop:band-projections-are-positive}(b) above shows that every band projection on a pre-Riesz space is an order projection. In Theorem~\ref{thm:main} below we prove that the converse implication is also true.

But first, we note the following result which is true on every partially ordered vector space with generating cone, be it pre-Riesz or not.

\begin{proposition} \label{prop:at-most-one-order-projection}
	Let $(X,X_+)$ be a partially ordered vector space with generating cone. If two order projections $P$ and $Q$ have the same range, then $P = Q$.
\end{proposition}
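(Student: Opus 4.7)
The plan is to reduce everything to behaviour on the positive cone and then exploit that the cone generates. Since $X_+$ is generating, any linear map is determined by its values on $X_+$, so it suffices to show $Px = Qx$ for all $x \in X_+$.

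For the positive cone argument, I would first record the key consequence of the hypothesis that $P,Q$ are order projections: for every $x \in X_+$, positivity of $\id - P$ yields $Px \le x$, and symmetrically $Qx \le x$. Next, I would use that the two projections share a range. Writing $Y := PX = QX$, both $P$ and $Q$ act as the identity on $Y$; in particular $QPx = Px$ and $PQx = Qx$ for every $x \in X$, since $Px, Qx \in Y$.

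Combining these two observations is the main step. Fix $x \in X_+$. Applying the positive operator $Q$ to the inequality $Px \le x$ gives $QPx \le Qx$, i.e.\ $Px \le Qx$. Swapping the roles of $P$ and $Q$ gives the reverse inequality $Qx \le Px$. Hence $Px = Qx$ for all $x \in X_+$. Since $X_+$ is generating, every $x \in X$ decomposes as $x = u-v$ with $u,v \in X_+$, and linearity then yields $Px = Qx$ for all $x \in X$.

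There is no real obstacle here: the only substantive ingredients are the defining positivity inequalities for an order projection, the elementary observation that a projection is the identity on its range, and the fact that $X_+$ generates $X$. In particular, neither the Archimedean property nor the pre-Riesz structure is needed, which is why the proposition is stated in the greater generality of arbitrary partially ordered vector spaces with generating cone.
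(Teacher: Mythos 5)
Your proof is correct, but it is organised differently from the one in the paper, and the difference is worth noting. The paper reduces the claim to the equality of kernels: for positive $x \in \ker P$ it sandwiches $Qx$ via $0 \le Qx = PQx \le Px = 0$, and then, to pass from $X_+ \cap \ker P$ to all of $\ker P$, it needs the auxiliary observation that $\ker P$ is the range of the positive projection $\id - P$ and therefore has a generating cone \emph{within itself}. You instead compare $Px$ and $Qx$ directly on $X_+$: from $Px \le x$ you get $Px = QPx \le Qx$ by positivity of $Q$ and the fact that $Q$ is the identity on the common range, and the symmetric inequality plus antisymmetry of the order (i.e.\ $X_+ \cap (-X_+) = \{0\}$) gives $Px = Qx$ on the cone; generation of $X_+$ in $X$ then finishes the job. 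Your route avoids the intermediate fact about positive projections preserving directedness of their ranges, at the cost of invoking antisymmetry explicitly, which the paper's version does not need (it only ever concludes $Qx = 0$ from $0 \le Qx \le 0$, which is of course the same axiom in disguise). Both arguments use exactly the stated hypotheses and neither requires the Archimedean or pre-Riesz structure, as you correctly point out.
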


Proposition~\ref{prop:at-most-one-order-projection} was proved for ordered Banach spaces in \cite[Proposition~2.1.4]{GlueckWolffLB}. The proof for partially ordered vector spaces is virtually the same; we include it here for the sake of completeness:

\begin{proof}[Proof of Proposition~\ref{prop:at-most-one-order-projection}]
	It suffices to prove that $P$ and $Q$ have the same kernel. To this end, first assume that $0 \le x \in \ker P$. We have $0 \le Qx \le x$; as $Qx$ is contained in the range of $P$ it follows that
	\begin{align*}
		0 \le Qx = PQx \le Px = 0,
	\end{align*}
	so indeed $Qx = 0$. Thus, $Q$ maps every vector in $X_+ \cap \ker P$ to $0$. Moreover, $\ker P$ is the range of the positive projection $\id - P$ and the cone $X_+$ is generating in $X$; hence every vector in $\ker P$ can be written is a difference of two positive vectors in $\ker P$ and thus, $Q$ maps the entire space $\ker P$ to $0$. This proves that $\ker P \subseteq \ker Q$. By interchanging the roles of $P$ and $Q$ we also obtain $\ker Q \subseteq \ker P$ which proves the assertion.
\end{proof}

The following theorem is the main result of this short note.

\begin{theorem} \label{thm:main}
	Let $(X,X_+)$ be a pre-Riesz space. Then a linear projection $P: X \to X$ is an order projection if and only if it is a band projection.
\end{theorem}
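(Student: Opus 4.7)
The implication ``band projection $\Rightarrow$ order projection'' is already Proposition~\ref{prop:band-projections-are-positive}(b), so the substance of the theorem is the converse. Given an order projection $P$, I would set $B := PX$ and aim to show that $B$ is a projection band with $B^\perp = \ker P$; this directly identifies $P$ as the band projection onto $B$.

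The heart of the argument is a disjointness lemma: every positive $x \in B$ is disjoint from every positive $y \in \ker P$. By Proposition~\ref{prop:disjointness-of-positive-vectors}, this reduces to showing $\inf(x,y) = 0$. For an arbitrary common lower bound $f$ of $x$ and $y$, I would apply both of the positive maps $P$ and $\id - P$ to the inequalities $f \le x$ and $f \le y$; since $Py = 0$ and $(\id - P)x = 0$, the useful consequences are $Pf \le 0$ and $(\id - P)f \le 0$, which add up to $f = Pf + (\id - P)f \le 0$.

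To promote positive disjointness to disjointness of the full ranges, I would exploit that $X$ is pre-Riesz. Because $P$ and $\id - P$ are positive and $X_+$ is generating, both $B_+ := B \cap X_+$ and $(\ker P)_+$ generate their respective subspaces. Since $B^\perp$ is a linear subspace of $X$ (the key pre-Riesz property recalled in Section~\ref{section:reminder}), the positive case extends to the inclusion $\ker P \subseteq B^\perp$. Combined with the automatic algebraic splitting $X = B \oplus \ker P$ coming from $P$ being a projection, and with $B \cap B^\perp = \{0\}$ (because $z \perp z$ forces $z = 0$), this yields $X = B \oplus B^\perp$.

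Two formal steps then close the proof. To see that $B$ is a band, I would take $w \in B^{\perp\perp}$ and decompose $w = b + b'$ with $b \in B$ and $b' \in B^\perp$ via the direct sum just obtained; since $B \subseteq B^{\perp\perp}$, the component $b' = w - b$ lies in $B^\perp \cap B^{\perp\perp} = \{0\}$, so $w = b \in B$. The reverse inclusion $B^\perp \subseteq \ker P$ follows from the same decomposition trick applied to $w \in B^\perp$: its component $Pw$ then lies in $B \cap B^\perp = \{0\}$, forcing $w \in \ker P$. The only genuine obstacle I anticipate is the disjointness lemma in the second paragraph, where the hypothesis that \emph{both} $P$ and $\id - P$ are positive is used; everything afterwards is standard bookkeeping with the direct-sum decomposition.
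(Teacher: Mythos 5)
Your proof is correct and follows essentially the same route as the paper's: the same common-lower-bound argument (applying $P$ and $\id-P$ to $f\le x$, $f\le y$) for disjointness of positive elements of $PX$ and $\ker P$, the same bilinear extension via the fact that disjoint complements are subspaces in a pre-Riesz space, and the same use of $z\perp z\Rightarrow z=0$. The only difference is harmless bookkeeping at the end: the paper proves both inclusions of $PX=(\ker P)^\perp$ directly (invoking Proposition~\ref{prop:disjointness-smaller-vectors}) and concludes by symmetry, whereas you prove only $\ker P\subseteq (PX)^\perp$ and recover everything else from the algebraic splitting $X=PX\oplus\ker P$.
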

\begin{proof}
	If $P$ is a band projection, then it is also an order projection by Proposition~\ref{prop:band-projections-are-positive}(b). So assume conversely that $P$ is an order projection. It will be convenient to denote the complementary projection of $P$ by $Q := \id - P$. Then $0 \le P,Q \le \id$.
	
	We are going to prove that $PX = (QX)^\perp$, and to this end we first show that $PX \cap X_+ = (QX \cap X_+)^\perp \cap X_+$. In the following, we will tacitly use the characterisation of disjointness of positive vectors given in Proposition~\ref{prop:disjointness-of-positive-vectors}.
	
	``$\subseteq$'' Let $0 \le x \in PX$ and let $0 \le y \in QX$; we have to prove that $x \perp y$. Clearly, $0$ is a lower bound of both $x$ and $y$, so let $f \in X$ be another lower bound of those vectors. Then $Pf \le Py = 0$ and $Qf \le Qx = 0$, so $f = Pf + Qf \le 0$. Hence, $x$ and $y$ have infimum $0$ and are therefore disjoint. 
	
	``$\supseteq$'' Let $x \in X_+$ and assume that $x$ is disjoint to every vector $0 \le y \in QX$. We have to show $x \in PX$ which is equivalent to showing $Qx = 0$. The vector $Qx$ is positive and contained in $QX$, so it is disjoint to $x$. However, the vector $Qx$ is located between $0$ and $x$, so it follows from Proposition~\ref{prop:disjointness-smaller-vectors} that $Qx \perp Qx$ and thus, $Qx = 0$.
	
	Now we can show that actually $PX = (QX)^\perp$:
	
	``$\subseteq$'' Let $x \in PX$ and let $y \in QX$. We have to show that $x$ and $y$ are disjoint. The positive cone $(PX)_+ := PX \cap X_+ = PX_+$ in $PX$ is generating in $PX$ since $X_+$ is generating in $X$. Similarly, $(QX)_+ := QX \cap X_+$ is generating in $QX$. Hence, we can find positive vectors $x_1,x_2 \in (PX)_+$ and $y_1,y_2 \in (QX)_+$ such that $x = x_1-x_2$ and $y = y_1 - y_2$. The vector $x_1$ is disjoint to both $y_1$ and $y_2$, so it is also disjoint to $y$, and for the same reason the vector $x_2$ is disjoint to $y$. Thus, $x = x_1-x_2$ is disjoint to $y$.
	
	``$\supseteq$'' Let $x \in (QX)^\perp$. By the inclusion that we have proved right above, the vector $Px$ is also contained in $(QX)^\perp$ and since $(QX)^\perp$ is a vector subspace of $X$, we conclude that $Qx = x-Px$ is contained in $(QX)^\perp$, too. In particular, $Qx$ is disjoint to itself, so $Qx = 0$. Hence, $x = Px \in PX$.
	
	We have thus shown that $PX = (QX)^\perp$, and now it is easy to deduce the assertion of the theorem: by interchanging the roles of $P$ and $Q$ we also obtain $QX = (PX)^\perp$ and hence, $(PX)^{\perp\perp} = (QX)^\perp = PX$. Thus, $PX$ is a band. Moreover, we have $X = PX \oplus QX = PX \oplus (PX)^\perp$, so $PX$ is a projection band. Since $P$ is the projection onto $PX$ along $QX = (PX)^\perp$, we conclude that $P$ is a band projection.
\end{proof}

\begin{remark}
	In \cite[Section~2]{GlueckWolffLB} a linear projection $P$ on an ordered Banach space with generating cone is called a \emph{band projection} if both $P$ and $\id - P$ are positive. Theorem~\ref{thm:main} shows that this terminology is consistent with the terminology used in the present short note (note that an ordered Banach space in the sense of \cite{GlueckWolffLB} has closed cone and is thus Archimedean; hence, such a space is pre-Riesz in case that it has a generating cone).
\end{remark}

Let us close this note with the following consequence of Theorem~\ref{thm:main}.

\begin{corollary}
	Let $(X,X_+)$ be a pre-Riesz space. If $V \subseteq X$ is a vector subspace of $X$ and $X = V + V^\perp$, then $V$ is a projection band.
\end{corollary}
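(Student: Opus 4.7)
The plan is to realise $V$ as the range of an order projection and then apply Theorem~\ref{thm:main}. First I would verify that the decomposition $X = V + V^\perp$ is automatically direct: if $x \in V \cap V^\perp$, then $x$ is disjoint to every element of $V$, in particular to itself, which forces $x = 0$. Hence $X = V \oplus V^\perp$, and there is a well-defined linear projection $P : X \to X$ onto $V$ along $V^\perp$.

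The core step is to show that $P$ is an order projection. Given $x \in X_+$, write $x = v + w$ with $v \in V$ and $w \in V^\perp$; by the definition of the disjoint complement, $v \perp w$. Since $v + w = x \ge 0$, Proposition~\ref{prop:band-projections-are-positive}(a), applied to the disjoint decomposition of the positive vector $x$, gives $v \ge 0$ and $w \ge 0$. Thus $Px = v \in X_+$ and $(\id - P)x = w \in X_+$ for every $x \in X_+$, meaning that both $P$ and $\id - P$ are positive.

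Once $P$ is known to be an order projection, Theorem~\ref{thm:main} applies directly: $P$ is a band projection, and hence its range $V = PX$ is a projection band. There is no serious obstacle in this argument; the only thing to spot is that the hypothesis $X = V + V^\perp$ provides precisely the data required by Proposition~\ref{prop:band-projections-are-positive}(a), namely a representation of each positive vector as the sum of two disjoint summands, from which positivity of the summands (and thus of $P$ and $\id - P$) falls out automatically.
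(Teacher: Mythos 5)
Your argument is correct and follows essentially the same route as the paper's own proof: establish that the sum is direct via $x \perp x \Rightarrow x = 0$, use Proposition~\ref{prop:band-projections-are-positive}(a) to show the projection onto $V$ along $V^\perp$ is an order projection, and then invoke Theorem~\ref{thm:main}. No gaps.
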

\begin{proof}
	First note that $V \cap V^\perp = \{0\}$, so the assumption implies that $X = V \oplus V^\perp$. By $P: X \to X$ we denote the projection onto $V$ along $V^\perp$. Let $0 \le x \in X$; since the vectors $Px$ and $(\id-P)x$ are disjoint, it follows from Proposition~\ref{prop:band-projections-are-positive}(a) that both $Px$ and $(\id-P)x$ are positive. Hence, $P$ is an order projection and thus a band projection according to Theorem~\ref{thm:main}. This readily implies that $V = PX$ is a projection band.
\end{proof}

\subsection*{Acknowledgement}

The idea for Theorem~\ref{thm:main} note arose during a very pleasant visit of the author at the Institute of Analysis at Technische Universit\"at Dresden. The author wishes to thank the members of the institute for their kind hospitality and for many stimulating and enlightening discussions.

\bibliographystyle{plain}
\bibliography{literature}

\end{document}